\newtheorem{theorem}{Theorem}[section]
\newtheorem{lemma}[theorem]{Lemma}
\newtheorem{cor}[theorem]{Corollary}
\theoremstyle{definition}
\newtheorem{definition}[theorem]{Definition}
\theoremstyle{remark}
\newtheorem{remark}[theorem]{Remark}
\numberwithin{equation}{section}
\begin{document}

\title{Smooth classification of locally standard $T^k$-manifolds}


\author{Michael Wiemeler}
\address{Mathematisches Institut\\ WWU M\"unster\\ Einsteinstra\ss{}e 62\\ 48149 M\"unster\\Germany}
\curraddr{}
\email{wiemelerm@uni-muenster.de}


\subjclass[2010]{Primary 57S15, 57R35}

\keywords{}

\date{\today}

\dedicatory{}

\begin{abstract}
  We study locally standard \(T^k\)-manifolds \(M\).
  In particular, we study the case where there is a continuous section to the orbit map \(\pi:M\rightarrow M/T\).
  We give a classification of \(T^k\)-manifolds satisfying these conditions up to equivariant diffeomorphism.
\end{abstract}

\maketitle

\section{Introduction}

A smooth action of a torus \(T^k\) on a manifold \(M\) (without boundary, not necessary compact) is called locally standard if each \(T^k\)-orbit in \(M\) has an invariant neighborhood which is equivariantly diffeomorphic to some \(V=\mathbb{C}^n\times T^{k-n}\times \mathbb{R}^m\), where \(n\) and \(m\) may depend on the orbit.
Here the action on \(V\) can be described as follows: There is a splitting \(T^k=T^n\times T^{k-n}\) such that \(T^n\) acts linearly and effectively on \(\mathbb{C}^n\), \(T^{k-n}\) acts by left multiplication on itself and \(T^k\) acts trivially on \(\mathbb{R}^m\).
The action on \(V\) is the product of these actions.

If \(\dim M=2k\) and there is a \(T^k\)-fixed point in \(M\) then \(M\) is a torus manifold and it is locally standard in the usual sense if and only if it is locally standard in the sense of the above definition.
The orbit space of a locally standard \(T^k\)-manifold is naturally a manifold with corners.
If \(M\) is a torus manifold and \(M/T\) is homeomorphic to a simple convex polytope, then \(M\) is said to be quasitoric.
The study of quasitoric manifolds began with \cite{DJ91}.
In \cite{wiem13}, they have been classified up to equivariant diffeomorphism in terms of combinatorial data.
Later in \cite{wiem15} this classification has been extended to locally standard torus manifolds with shellable orbit spaces all of whose closed faces are diffeomorphic to standard discs after smoothing corners.

Later on in \cite{sarkar19:_equiv_cohom_rigid_topol_contac_toric_manif} topological contact toric manifolds have been introduced and classified up to equivariant homeomorphism.
These are also locally standard \(T^k\)-manifolds in our sense.

In this note we prove a classification of general locally standard \(T^k\)-manifolds up to equivariant diffeomorphism.
Our main result is  as follows.

\begin{theorem}
  \label{sec:introduction}
  Let \(M_i\), \(i=1,2\), be two locally standard \(T^k\)-manifolds with orbit spaces \(P_i\) and characteristic functions \(\lambda_i\).
  Assume that there are sections \(s_i:P_i\rightarrow M_i\) to the orbit maps.

  If there is a diffeomorphism
  \[\Phi:P_1\rightarrow P_2\]
  such that \[\lambda_1=\lambda_2\circ \Phi,\] then there is an equivariant diffeomorphism \(\Psi:M_1\rightarrow M_2\).
\end{theorem}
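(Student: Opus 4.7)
The plan is to reduce the problem to showing that a locally standard $T^k$-manifold with a section is equivariantly diffeomorphic to a ``canonical model'' built directly from the pair $(P,\lambda)$. Once both $M_i$ are identified with such models, the hypothesis $\lambda_1=\lambda_2\circ\Phi$ immediately produces the equivariant diffeomorphism $M_1\to M_2$.

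More precisely, I would first define, for any manifold with corners $P$ equipped with a characteristic function $\lambda$, the quotient space
\[ X(P,\lambda) \;=\; (P\times T^k)/\!\sim, \qquad (p,t)\sim(p,t') \iff t^{-1}t'\in\lambda(F(p)), \]
where $F(p)$ denotes the smallest closed face of $P$ containing $p$. This is the standard Davis--Januszkiewicz construction, and the $T^k$-action on the second factor descends to a locally standard action whose orbit space is $P$ with characteristic function $\lambda$. The smooth structure on $X(P,\lambda)$ is imposed locally by requiring, near a point $[p,t]$ with $p$ in the relative interior of a codimension-$n$ face, that the standard chart of the form $\CC^n\times T^{k-n}\times\RR^m$ maps diffeomorphically onto a neighborhood of $[p,t]$. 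Given a section $s_i:P_i\to M_i$, the map
\[ \psi_i:P_i\times T^k\longrightarrow M_i,\qquad (p,t)\longmapsto t\cdot s_i(p), \]
is equivariant and, by definition of $\lambda_i$ as the isotropy data, descends to a continuous equivariant bijection $\bar\psi_i:X(P_i,\lambda_i)\to M_i$.

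The main obstacle is upgrading $\bar\psi_i$ from a continuous bijection to a smooth diffeomorphism near points of $P_i$ lying on corners, because the section $s_i$ is a priori only continuous (or at best smooth on the interior) and need not be compatible with the local standard charts around the exceptional orbits. To handle this, I would work locally: choose an invariant tubular neighborhood of an orbit through $s_i(p)$ that is equivariantly diffeomorphic to a standard slice $\CC^n\times T^{k-n}\times\RR^m$, and then modify $s_i$ by an equivariant smoothing so that on this neighborhood the section coincides with the canonical real section $(z,t,x)\mapsto(|z|,1,x)$ of the standard model. A globalization using an invariant partition of unity subordinate to a cover by standard charts then produces a section $\tilde s_i$ (smooth on the interior and compatible with every local standard form near the boundary) such that the corresponding map $\bar\psi_i$ is a smooth equivariant diffeomorphism. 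The key point to verify is that the ambiguity in replacing $s_i$ by $\tilde s_i$ lies entirely in the principal $T^k$-bundle structure on the preimage of $\interior P_i$, which is trivial since a section exists; thus the modification can be absorbed into a global equivariant self-diffeomorphism of $M_i$.

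Once both $\bar\psi_i:X(P_i,\lambda_i)\to M_i$ are established as equivariant diffeomorphisms, the given diffeomorphism $\Phi:P_1\to P_2$ with $\lambda_1=\lambda_2\circ\Phi$ induces an equivariant diffeomorphism
\[ \bar\Phi:X(P_1,\lambda_1)\longrightarrow X(P_2,\lambda_2),\qquad [p,t]\longmapsto[\Phi(p),t], \]
which is well-defined precisely because $\lambda_1(F(p))=\lambda_2(F(\Phi(p)))$. Composing gives the desired equivariant diffeomorphism $\Psi=\bar\psi_2\circ\bar\Phi\circ\bar\psi_1^{-1}:M_1\to M_2$. The bulk of the work will therefore be in the smoothing argument of the previous paragraph, where one has to argue carefully that the standard-chart cover of $M_i$ can be chosen compatibly with the section and that the resulting smooth structure on $X(P_i,\lambda_i)$ does not depend on these choices.
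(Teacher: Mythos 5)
Your overall skeleton is close to the paper's: the paper also gets the underlying equivariant homeomorphism from the Davis--Januszkiewicz comparison (Proposition 1.8 of \cite{DJ91}, applied with the sections) and then reduces smoothness to the local standard models $\CC^n\times T^{k-n}\times\RR^m$. The genuine gap is in the step you yourself flag as ``the bulk of the work'', namely the production of a section compatible with the local charts. First, you cannot in general arrange the section to literally coincide with the canonical real section $(x,y)\mapsto(\sqrt{x_1},\dots,\sqrt{x_n},1,y)$ in every standard chart; the most one can ask is the paper's weaker notion of a \emph{regular} section, i.e.\ that locally $s=f\cdot s_0$ with $f$ a smooth $T^k$-valued map (and one must check this notion is chart-independent, which uses the structure theorem for equivariant self-diffeomorphisms of the model). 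Second, your globalization device fails: an invariant partition of unity cannot patch the discrepancies, because they are $T^k$-valued and tori admit no convex averaging; nor can a change of section over the principal part simply be ``absorbed into a global equivariant self-diffeomorphism'', since whether that self-map is smooth up to the singular strata is exactly the regularity question being begged. In the paper the existence of a regular section is instead proved by induction over the faces of $M/T$: at each step one must extend a smooth map $E_{j+1}-F_{j+1}\to T^k$ over the face $F_{j+1}$, which is possible only after a perturbation/homotopy using $E_{j+1}-F_{j+1}\simeq E_{j+1}$. Without an argument of this kind your $\tilde s_i$ need not exist.

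A second, smaller gap: both the well-definedness of the smooth structure you impose on $X(P,\lambda)$ by declaring standard charts to be smooth, and the smoothness of $\bar\Phi([p,t])=[\Phi(p),t]$ across the corners, rest on non-trivial analytic facts --- that smooth $T^k$-invariant functions factor smoothly through $(|z_1|^2,\dots,|z_n|^2)$ (a Bredon/Schwarz-type lemma) and that $z_i\mapsto\sqrt{\Phi_i(\pi(z))/|z_i|^2}\,z_i$ is smooth when $\Phi$ is a face-preserving diffeomorphism of the orbit space. These are precisely the technical lemmas and the description of $\mathcal{D}(M)\cong C^\infty(M/T,T^k)\rtimes\mathrm{Diff}(M/T)$ that carry the paper's proof; your proposal defers them without indicating how they would be established, so as written the smooth-structure and $\bar\Phi$ steps are also unjustified.
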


In the above theorem the characteristic function \(\lambda\) is a map from the orbit space of a locally standard \(T^k\)-manifold \(M\) to the set of subtori of \(T^k\) which assigns to each orbit its isotropy group.
Note that \(\lambda\) is constant on open faces of \(M/T\).
Therefore we might also think of \(\lambda\) as defined on the set of faces of \(M/T\).

Note that there is always a section to the orbit map \(M\rightarrow M/T\) if \(H^2(M/T;\mathbb{Z})=0\).

By combining the above theorem with Theorem 4.2 of \cite{davi14} we also get:

\begin{theorem}
  Let \(M_i\), \(i=1,2\), be two locally standard \(T^k\)-manifolds with orbit spaces \(P_i\) and characteristic functions \(\lambda_i\).
Assume that all closed faces of the \(P_i\) are contractible.

  If there is a isomorphism of posets
  \[\Phi:\mathcal{P}(P_1)\rightarrow \mathcal{P}(P_2)\]
  such that \[\lambda_1=\lambda_2\circ \Phi,\]
  and \(\Phi(F)\) is diffeomorphic to \(F\) after smoothing corners for all four-dimensional faces \(F\) of \(P_1\) then there is an equivariant diffeomorphism \(\Psi:M_1\rightarrow M_2\).
\end{theorem}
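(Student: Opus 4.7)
The plan is to reduce everything to Theorem 1.1 by promoting the combinatorial poset isomorphism $\Phi$ to a genuine diffeomorphism of the orbit spaces, using Davis's classification of nice manifolds with corners having contractible faces.

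First, I would apply Theorem 4.2 of \cite{davi14} to the manifolds with corners $P_1$ and $P_2$. The hypotheses of that theorem are exactly what is assumed here: all closed faces of $P_i$ are contractible, the given poset isomorphism $\Phi:\mathcal{P}(P_1)\to\mathcal{P}(P_2)$ identifies their face structures, and for every $4$-dimensional face $F$ of $P_1$ the face $\Phi(F)$ is diffeomorphic to $F$ after smoothing corners. Davis's theorem then produces a diffeomorphism
\[\hat\Phi: P_1 \rightarrow P_2\]
of manifolds with corners whose induced map on the face poset is $\Phi$. The $4$-dimensional condition is what is needed to rule out the exotic smoothings that can obstruct such a lift in middle dimension.

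Next, I would verify that $\hat\Phi$ intertwines the characteristic functions, i.e.\ $\lambda_1=\lambda_2\circ\hat\Phi$. Since $\lambda_i$ is constant on each open face and $\hat\Phi$ carries the open face corresponding to $F$ onto the open face corresponding to $\Phi(F)$, the hypothesis $\lambda_1=\lambda_2\circ\Phi$ at the poset level transfers immediately to the desired equality of functions on $P_1$.

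Finally, I would check that the sections $s_i:P_i\to M_i$ required for applying Theorem 1.1 actually exist. Because every closed face of $P_i$ is contractible, in particular $P_i$ itself is contractible; hence $H^2(P_i;\mathbb{Z})=0$, and the remark following Theorem 1.1 guarantees sections of the orbit maps $M_i\to P_i$. An application of Theorem 1.1 to $\hat\Phi$ then produces the equivariant diffeomorphism $\Psi:M_1\to M_2$.

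The only nontrivial step is the first one: the entire content of the argument is packaged inside Davis's theorem, and the role of the $4$-dimensional hypothesis is precisely to make that theorem applicable. Once $\hat\Phi$ is in hand, the rest of the proof is essentially bookkeeping together with a direct invocation of Theorem 1.1.
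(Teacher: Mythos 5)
Your proposal is correct and matches the paper's intended argument exactly: the paper obtains this result by combining Theorem~\ref{sec:introduction} with Theorem 4.2 of \cite{davi14}, which is precisely your reduction (Davis's theorem upgrades the poset isomorphism to a diffeomorphism of the orbit spaces, contractibility gives $H^2(P_i;\mathbb{Z})=0$ and hence sections, and then Theorem~\ref{sec:introduction} applies). Your write-up simply supplies the bookkeeping details the paper leaves implicit.
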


Here \(\mathcal{P}(P)\) denotes the face poset of \(P\), i.e. the set of faces of \(P\) partially ordered by inclusion.

Theorem~\ref{sec:introduction} is probably known to experts.
It follows for example from results of \cite{MR1116630}. But as pointed out in \cite{karshon20:_equiv} there is a gap in the arguments of {\cite{MR1116630}}.
However, we could not find another proof in the literature. Moreover, because the arguments presented here are much simpler than those given for example in \cite{karshon20:_equiv} or \cite{wiem13} we think they are worth to be written down.

This note is structured as follows.
In the next section we describe local properties of locally standard torus actions. Then we show the existence of so-called regular cross-sections to orbit maps if there is a section to the orbit map.
This is used in the last section to proof Theorem~\ref{sec:introduction}.

I would like to thank the Fields Institute in Toronto, Canada, for hospitality where the work for this paper was carried out.
I also want to thank the participants of the Thematic Program on Toric Topology and Polyhedreal Products 2020 at the Fields Institute for discussions on the subject of this paper.
In particular, I want to thank Matthias Franz and Mikiya Masuda.

This research was supported by Deutsche Forschungsgemeinschaft through CRC 1442 Geometry: Deformations and Rigidity and through the Cluster of Excellence Mathematics Münster.

\section{Local coordinates}

In this section we describe local properties of orbit spaces of locally standard \(T^k\)-actions and introduce a smooth structure on these orbit spaces.

We start with a lemma.

\begin{lemma}
  Let \(f:\mathbb{R}_{\geq 0}^n\times \mathbb{R}^m\rightarrow \mathbb{R}\) be a map. Then \(f\) is smooth if and only if \[F:\mathbb{R}^n\times \mathbb{R}^m\rightarrow \mathbb{R} \quad\quad (x_1,\dots,x_n,y_1,\dots,y_m)\mapsto f(x_1^2,\dots,x_n^2,y_1,\dots,y_m)\] is smooth.
\end{lemma}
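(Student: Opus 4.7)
The implication ``$f$ smooth $\Rightarrow F$ smooth'' is immediate: by definition, smoothness of $f$ on the corner $\mathbb{R}_{\geq 0}^n \times \mathbb{R}^m$ means that $f$ extends to a smooth function $\tilde f$ on an open neighborhood of this corner in $\mathbb{R}^n \times \mathbb{R}^m$, and then $F = \tilde f \circ \sigma$ with $\sigma(x,y) = (x_1^2,\ldots,x_n^2,y)$ is smooth as a composition of smooth maps.

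The substantive direction is the converse. Assume $F$ is smooth. Since $\sigma$ is invariant under each reflection $x_i \mapsto -x_i$, the function $F$ is even in each $x_i$ separately. I would reduce the statement to the following, which is essentially Whitney's classical theorem on even smooth functions in parametric form: if $G(t,z)$ is smooth on $\mathbb{R} \times \mathbb{R}^N$ and $G(-t,z) = G(t,z)$, then $G(t,z) = H(t^2,z)$ for some smooth $H$ on $\mathbb{R} \times \mathbb{R}^N$. This parametric version is proved by the same argument as the classical one (iterated Hadamard's lemma applied to the odd function $G_t$, followed by Borel-type bookkeeping to realize the formal Taylor expansion of $H$ at $\{u = 0\}$), since each step preserves smooth dependence on the parameters~$z$.

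Given the parametric Whitney lemma, I would proceed by induction on $n$. In the inductive step apply the lemma with parameters $(x_1,\ldots,x_{n-1},y)$ in the variable $x_n$ to obtain a smooth $F'$ on $\mathbb{R}^n \times \mathbb{R}^m$ with $F(x,y) = F'(x_1,\ldots,x_{n-1},x_n^2,y)$. Setting $u := x_n^2$, the function $F'(\cdot,u,\cdot)$ is automatically even in the remaining $x_j$ whenever $u \geq 0$ (it coincides there with $F(\cdot,\sqrt u,\cdot)$), but perhaps not when $u < 0$. Replace $F'$ by its average $\hat F'$ over the group $(\mathbb{Z}/2)^{n-1}$ of sign changes in $x_1,\ldots,x_{n-1}$: the result is smooth on $\mathbb{R}^n \times \mathbb{R}^m$, even in each of those variables, and still satisfies $F(x,y) = \hat F'(x_1,\ldots,x_{n-1},x_n^2,y)$ since on $\{u \geq 0\}$ the symmetrization acts trivially. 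The inductive hypothesis applied to $\hat F'$, with its $u$-slot absorbed into the $\mathbb{R}^m$-parameter block, produces a smooth $\tilde f$ on $\mathbb{R}^n \times \mathbb{R}^m$ with $F(x,y) = \tilde f(x_1^2,\ldots,x_n^2,y)$; its restriction to the corner is the required smooth extension of $f$.

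\textbf{Main obstacle.} The only nontrivial ingredient is the parametric Whitney lemma — specifically, smoothness of $H$ across $\{u = 0\}$. I expect this to go through by the standard argument: using $G_t(0,z) = 0$ together with Hadamard, write $G(t,z) = G(0,z) + t^2 \Phi(t,z)$ with $\Phi$ smooth; evenness of $G$ in $t$ forces evenness of $\Phi$ in $t$, so the procedure iterates to peel off arbitrarily many powers of $t^2$. Parameter dependence causes no trouble because all operations used (Taylor expansion, division by $t$ via Hadamard, integration over compact intervals) preserve smoothness in $z$.
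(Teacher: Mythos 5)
Your argument is correct, and its skeleton --- induction on the number of corner variables, powered by the classical one-variable Whitney theorem on even functions with parameters --- is the same as the paper's, which quotes Bredon for the \(n=1\) case. The inductive steps, however, genuinely differ. The paper keeps the one-sided statement and works with finite-order Taylor expansions: it expands \(F\) at \(\{x_1=0\}\), observes that only even powers of \(x_1\) occur, applies the inductive hypothesis to the coefficient functions (which are even in the remaining variables), and concludes that \(f\) is \(C^r\) along \(\{x_1=0\}\) for every \(r\). You instead prove the stronger global statement that a smooth function on \(\mathbb{R}^n\times\mathbb{R}^m\), even in each \(x_i\), equals a smooth function of \((x_1^2,\dots,x_n^2,y)\) defined on all of \(\mathbb{R}^n\times\mathbb{R}^m\): you substitute out one variable via the two-sided parametric Whitney lemma, restore evenness in the remaining variables for negative values of the new slot by averaging over sign changes (a step the paper never needs, and the genuinely new observation in your route --- it is correct, since the average agrees with \(F'\) on \(\{u\geq 0\}\)), and then invoke the strengthened inductive hypothesis. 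Your version buys an honest smooth extension of \(f\) to an open neighborhood of the corner and avoids remainder estimates in mixed coordinates, at the price of needing the full two-sided parametric Whitney lemma rather than only one-sided \(C^r\)-regularity; as you say, that lemma is the one ingredient not written out, and your Hadamard-plus-Borel sketch of it is the standard argument. One small point of hygiene: state the induction hypothesis explicitly as the strengthened two-sided claim (an even smooth function of \(n-1\) variables with parameters is a smooth function of the squares on all of \(\mathbb{R}^{n-1}\times\mathbb{R}^{N}\)), since that, and not the lemma as literally stated, is what you apply to \(\hat F'\).
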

\begin{proof}
  Note that when \(f\) is smooth it immediately follows that \(F\) is smooth.
  Therefore we only have to show the other implication.
  
  For \(n=1\) this was done in Section 5 of Chapter VI in \cite{bred72}.
  To prove the general case we proceed by induction on \(n\).
 Assume that the lemma holds for some fixed \(n\in\mathbb{N}\). We have to show that this implies that it holds for \(n+1\).
  This is similar to the arguments in the proof in Bredon's book. Therefore we only indicate the idea.

  Let \(f:\mathbb{R}_{\geq 0}^{n+1}\times \mathbb{R}^m\rightarrow \mathbb{R}\) such that the corresponding \(F\) is smooth. Then it follows from the induction hypothesis that \(f\) is smooth away from \(\{x_1=0\}\).
  Since \(F\) is an even function in all \(x_i\)-coordinates its Taylor-expansion at \(x_1=0\) up to order \(2r+1\) is of the form
  \[\begin{split}h_0(x_2,\dots,x_{n+1},y_1,\dots,y_m)+h_1(x_2,\dots,x_{n+1},y_1,\dots,y_m)x_1^2+\dots\\ + h_r(x_2,\dots,x_{n+1},y_1,\dots,y_m)x_1^{2r}\end{split}\]
  Note that the \(h_i\) are smooth functions which are even in all \(x_i\)-variables.
  Hence by the induction hypothesis, they are of the form
  \[h_i(x_2,\dots,x_{n+1},y_1,\dots,y_m)=\bar{h}_i(x_2^2,\dots,x_{n+1}^2,y_1,\dots,y_m),\]
  with some smooth map \(\bar{h}_i:\mathbb{R}_{\geq 0}^n\times \mathbb{R}^m\rightarrow \mathbb{R}\).
  Hence, it follows from the above form of the Taylor expansion of \(F\) that \[f(x_1,\dots,x_{n+1},y_1,\dots,y_m)=F(\sqrt{x_1},\dots,\sqrt{x_{n+1}},y_1,\dots,y_m)\]
  is of class \(C^r\) at \(\{x_1=0\}\). Since this holds for all \(r\), it follows that \(f\) is smooth.
\end{proof}

Now let \(T^k\) act on \(M=\mathbb{C}^n\times T^{k-n}\times \mathbb{R}^m\) in the standard way.
Then the map
\[\pi:M\rightarrow \mathbb{R}^n_{\geq 0}\times \mathbb{R}^m\quad \quad (z_1,\dots,z_n,t,y)\mapsto (|z_1|^2,\dots,|z_n|^2,y)\]
induces an identification of \(M/T\) with \(\mathbb{R}_{\geq 0}^n\times \mathbb{R}^m\).
From the above lemma we immediately get the following corollary.

\begin{cor}
  A map \(f:\mathbb{R}_{\geq 0}^n\times \mathbb{R}^m\rightarrow \mathbb{R}\) is smooth if and only if \(f\circ \pi\) is smooth.
  In particular every smooth torus invariant map \(M \rightarrow \mathbb{R}\) induces a smooth map on the orbit space.
\end{cor}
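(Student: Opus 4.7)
The plan is to reduce both directions of the corollary directly to the preceding lemma by exhibiting a suitable smooth slice in $M$. The forward direction is essentially trivial: $\pi$ is (the restriction of) a polynomial map $\mathbb{C}^n\times T^{k-n}\times \mathbb{R}^m \to \mathbb{R}^n\times \mathbb{R}^m$, so it is smooth, and smoothness of $f$ on $\mathbb{R}_{\geq 0}^n\times\mathbb{R}^m$ means by definition that $f$ extends smoothly to an open neighborhood in $\mathbb{R}^n\times \mathbb{R}^m$; composing with $\pi$ then produces a smooth function on $M$.

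For the reverse direction, I would fix an auxiliary point $t_0\in T^{k-n}$ and consider the smooth embedding
\[\iota:\mathbb{R}^n\times\mathbb{R}^m\longrightarrow M,\qquad (x_1,\dots,x_n,y)\longmapsto(x_1,\dots,x_n,t_0,y),\]
where each $x_j\in\mathbb{R}$ is regarded as lying in $\mathbb{R}\subset\mathbb{C}$. Then $\pi\circ \iota(x,y)=(x_1^2,\dots,x_n^2,y)$, so the composition
\[F(x_1,\dots,x_n,y):=(f\circ\pi)\circ\iota(x_1,\dots,x_n,y)=f(x_1^2,\dots,x_n^2,y)\]
is precisely the function attached to $f$ in the statement of the lemma. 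If $f\circ\pi$ is smooth on $M$, then $F$ is smooth as a composition of smooth maps, and applying the lemma yields that $f$ itself is smooth on $\mathbb{R}_{\geq 0}^n\times \mathbb{R}^m$.

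Finally, the "in particular" clause is a formal consequence: if $g:M\to\mathbb{R}$ is smooth and $T^k$-invariant, then $g$ factors uniquely as $g=\bar g\circ\pi$ for some $\bar g:\mathbb{R}_{\geq 0}^n\times \mathbb{R}^m\to\mathbb{R}$, and the equivalence just proved applied to $\bar g$ shows that $\bar g$ is smooth. No serious obstacle is expected here; the only thing to watch is that the slice $\iota$ is chosen to hit all of $\mathbb{R}^n$ (not only $\mathbb{R}_{\geq 0}^n$) in the complex factors, which is exactly what makes the lemma applicable.
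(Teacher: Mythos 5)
Your proposal is correct and follows essentially the same route as the paper, which deduces the corollary immediately from the lemma: restricting \(f\circ\pi\) to the real slice \((x_1,\dots,x_n,t_0,y)\) produces exactly the function \(F(x,y)=f(x_1^2,\dots,x_n^2,y)\) of the lemma, while the forward direction and the factorization of an invariant map through \(\pi\) are routine. Your explicit spelling out of the slice \(\iota\) and of why it must range over all of \(\mathbb{R}^n\) is precisely the point the paper leaves implicit.
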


\begin{remark}
  The above lemma and corollary are special cases of a more general result of G.W. Schwarz (see \cite{schwarz75}).
\end{remark}

Applying this corollary to \(\pi\circ f\) where \(f\) is a transition function of invariant smooth charts on a locally standard \(T^k\)-manifold implies the following.

\begin{cor}
  The orbit space of a locally standard \(T^k\)-manifold is a smooth manifold with corners.
\end{cor}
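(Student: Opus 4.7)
The plan is to obtain a smooth-manifold-with-corners atlas on \(M/T\) by pushing down the invariant chart atlas on \(M\) provided by local standardness. Cover \(M\) by \(T^k\)-invariant open sets \(U_\alpha\) equipped with equivariant diffeomorphisms \(\phi_\alpha : U_\alpha \to V_\alpha\), where each \(V_\alpha\) is a \(T^k\)-invariant open subset of a local model \(\mathbb{C}^{n_\alpha}\times T^{k-n_\alpha}\times \mathbb{R}^{m_\alpha}\) with the standard action. Let \(\pi_\alpha\) denote the orbit projection on this local model, so that \(\pi_\alpha(V_\alpha)\) is an open subset of \(\mathbb{R}_{\geq 0}^{n_\alpha}\times \mathbb{R}^{m_\alpha}\). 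Then \(\phi_\alpha\) descends to a homeomorphism \(\bar\phi_\alpha : U_\alpha/T \to \pi_\alpha(V_\alpha)\), and these will be the declared charts.

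The only nontrivial task is showing that each transition \(\bar\phi_\beta\circ\bar\phi_\alpha^{-1}\) is smooth as a map between open subsets of half-spaces. Write \(f := \phi_\beta\circ\phi_\alpha^{-1}\), a smooth \(T^k\)-equivariant diffeomorphism between invariant open subsets of the two local models. Each component of the composite \(\pi_\beta\circ f\) is a smooth \(T^k\)-invariant real-valued function on its domain, hence by the previous corollary it descends to a smooth function on the relevant open subset of \(\mathbb{R}_{\geq 0}^{n_\alpha}\times\mathbb{R}^{m_\alpha}\). Assembling these components produces a smooth map from \(\pi_\alpha(\phi_\alpha(U_\alpha\cap U_\beta))\) into \(\mathbb{R}_{\geq 0}^{n_\beta}\times \mathbb{R}^{m_\beta}\), and by uniqueness of the descent this map coincides with \(\bar\phi_\beta\circ\bar\phi_\alpha^{-1}\). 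Swapping \(\alpha\) and \(\beta\) shows the inverse is equally smooth.

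The only point deserving attention is that the previous corollary is stated globally on \(\mathbb{R}_{\geq 0}^n\times \mathbb{R}^m\) while we need it on invariant open subsets. This is essentially automatic, since the underlying lemma concerns Taylor expansion at \(\{x_1=0\}\), a purely local statement near the stratified boundary. Combined with the fact that \(M/T\) inherits Hausdorffness and second countability from \(M\), the verified atlas endows \(M/T\) with the structure of a smooth manifold with corners, completing the proof.
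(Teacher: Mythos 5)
Your proposal is correct and is exactly the paper's argument, spelled out in more detail: the paper likewise pushes down the invariant atlas and applies the preceding corollary to \(\pi\circ f\) for a transition function \(f\) of invariant charts, obtaining smooth transitions between open subsets of \(\mathbb{R}_{\geq 0}^{n}\times\mathbb{R}^{m}\). Your remarks on the local nature of the corollary and on Hausdorffness/second countability are fine but are left implicit in the paper.
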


In the following we have to deal with \(T^k\)-manifolds of a particular type which we call conical.

\begin{definition}
  Let \(M\) be a locally standard \(T^k\)-manifold and \(U\subset M/T\) an open subset.
  We say that \(U\) is conical if it is diffeomorphic to \(\mathbb{R}_{\geq 0}^{n}\times N\), where \(N\) is some manifold without boundary.
  We say that \(M\) is conical if \(M/T\) is conical.
\end{definition}

For conical \(T^k\)-manifolds we can show the following theorem.

\begin{theorem}
  \begin{enumerate}
  \item   Every conical \(T^{k}\) manifold \(M\) over \(\mathbb{R}_{\geq 0}^{n}\times N\)
  is equivariantly diffeomorphic to the normal bundle of \(N_0=\pi^{-1}(\{0\}\times N)\) in \(M\).
  \item If there is a section to the orbit map \(\pi:M\rightarrow M/T\), then this normal bundle is trivial, i.e. (weakly) equivariantly diffeomorphic to  \(\mathbb{C}^{n}\times T^{k-n}\times N\).
  \end{enumerate}
\end{theorem}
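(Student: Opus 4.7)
\emph{Proof proposal.} My plan is to prove part (1) by combining an equivariant tubular neighborhood with a radial contraction flow, and then to deduce part (2) from part (1) by using the section to trivialize the principal bundle structure on $N_0$ and each line subbundle of $E$.

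For part (1), first equip $M$ with a $T^k$-invariant Riemannian metric by averaging and apply the equivariant tubular neighborhood theorem to obtain an equivariant diffeomorphism $\Phi_0\colon W\to V$ from a $T$-invariant open neighborhood $W$ of the zero section of $E$ onto a $T$-invariant open neighborhood $V$ of $N_0$, covering the identity on $N_0$. Next, construct a $T^k$-invariant vector field $\xi$ on $M$ that projects to the radial field $R=\sum_i x_i\,\partial/\partial x_i$ on $M/T=\mathbb{R}_{\ge 0}^n\times N$: in each local-model chart $\mathbb{C}^n\times T^{k-n}\times\mathbb{R}^m$, the field $\tfrac{1}{2}\sum_i(z_i\,\partial_{z_i}+\bar z_i\,\partial_{\bar z_i})$ is an invariant lift of $R$, and because the set of lifts over a given open set is an affine space, these local lifts can be glued via a $T^k$-invariant partition of unity (which exists because such partitions descend to partitions of unity on $M/T$ by the Corollary above). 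The flow $\phi_s$ of $\xi$ is complete in both directions (it projects to $(x,y)\mapsto(e^s x,y)$ on $M/T$, and orbits of $T^k$ are compact so the preimage of each compact set is compact); the backward flow retracts $M$ onto $N_0$. The corresponding flow on $E$ is $\psi_s(v)=e^{s/2}v$, generated by half the fiberwise Euler field.

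The idea is now to extend $\Phi_0$ to a global equivariant diffeomorphism by the formula $\Phi(v)=\phi_{-s}\circ\Phi_0\circ\psi_s(v)$, with $s$ sufficiently negative that $\psi_s(v)\in W$. A short calculation shows that this is independent of $s$ exactly when $\Phi_0$ intertwines $\psi$ and $\phi$ on $W$, and this is the main technical obstacle. Both $\xi$ and the generator of $\psi$ vanish on $N_0$ with the same linearization on the normal bundle (scalar multiplication by $\tfrac{1}{2}$), and this linearization is trivially non-resonant; an equivariant Sternberg-linearization / Moser argument (solving $[Y_t,\tilde\xi_t]=\delta$ for a time-dependent vector field $Y_t$ where $\delta$ is the higher-order difference between $\Phi_0^{-1}_*\xi$ and the Euler-type field $\eta$, and integrating the flow of $Y_t$) produces an equivariant modification of $\Phi_0$, possibly after shrinking $W$, that does intertwine the two flows. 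With such a $\Phi_0$, the formula for $\Phi$ is well-defined and gives a $T^k$-equivariant diffeomorphism $E\to M$.

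For part (2), use part (1) to identify $M$ with $E=L_1\oplus\cdots\oplus L_n$ and view the given section as a section $s\colon P\to E$. Its restriction to $\{0\}\times N$ is a section of the principal $T^{k-n}$-bundle $N_0\to N$, and hence yields an equivariant trivialization $N_0\cong N\times T^{k-n}$. To trivialize each summand $L_i$, restrict $s$ to the $i$-th open edge $P_i^\circ=\{(x,y)\in P:x_j=0\text{ for }j\neq i,\,x_i>0\}\cong\mathbb{R}_{>0}\times N$ of $P$, fix $x_i=1$, and apply the unique $T^{k-n}$-translation sending the basepoint of the resulting vector into $s|_{\{0\}\times N}(N)\subset N_0$. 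This uses that $T^{k-n}$ acts freely on $N_0$ and, for a suitable choice of complement of the isotropy $T^n$ inside $T^k$, trivially on the fibers of $L_i$ by the local model. The output is a nowhere-vanishing section of $L_i$ over $s|_{\{0\}\times N}(N)\cong N$; since $L_i$ is the pullback of this restriction under the projection $N_0=N\times T^{k-n}\to N$, the bundle $L_i$ is trivial. Hence $E\cong N\times T^{k-n}\times\mathbb{C}^n$ weakly equivariantly, which is the desired conclusion. The principal obstacle throughout is the flow-intertwining modification of $\Phi_0$ in part (1); once this is granted, the global extension in part (1) and the argument for part (2) are routine.
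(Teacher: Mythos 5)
Your part (2) is essentially the paper's argument (split the normal bundle into the weight line bundles \(L_i\), reduce to one \(L_i\) at a time, and use the given section over the corresponding stratum to produce a nowhere-vanishing section; the observation that \(L_i\) descends along \(N_0\cong T^{k-n}\times N\rightarrow N\) is fine). The genuine gap is in part (1), and it is exactly the step you yourself flag as the ``main technical obstacle'': you never prove that \(\Phi_0\) can be modified to intertwine the linear flow \(\psi_s\) on \(E\) with the flow \(\phi_s\) of \(\xi\). Without that, the formula \(\Phi(v)=\phi_{-s}\circ\Phi_0\circ\psi_s(v)\) is not even well defined, so this intertwining carries the entire weight of the statement. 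The Sternberg/Moser argument you sketch is not routine here: you need a smooth, \(T^k\)-equivariant linearization of a vector field along a whole (generally non-compact) submanifold of zeros, not at an isolated hyperbolic fixed point, and the homological equation \([Y_t,\tilde\xi_t]=\delta\) has to be solved with uniform control over \(N\). Worse, the hypotheses you feed into it are not guaranteed by your construction: gluing the local lifts of \(R\) by an invariant partition of unity changes \(\xi\) by invariant vertical (orbit-tangent) fields, and along \(N_0\) such corrections need neither vanish nor have vanishing normal \(1\)-jet; their linearizations contribute infinitesimal rotations, so the normal linearization of \(\xi\) along \(N_0\) has the form \(\frac{1}{2}\id+A(y)\) with \(A(y)\) skew and varying over \(N_0\), not the scalar \(\frac{1}{2}\) you assert. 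So both the non-resonance setup and the linearization theorem itself are unproven claims on which all of part (1) rests.

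The paper avoids this entirely: \(d_0\circ\pi\) (the pullback of the Euclidean distance from the corner) is proper and has no critical points outside \(N_0\), so the standard Morse-theoretic noncritical-level argument, carried out invariantly with an invariant gradient-like field, shows that \(M\) is equivariantly diffeomorphic to an arbitrarily small invariant neighborhood of \(N_0\), hence to an equivariant tubular neighborhood, hence to the normal bundle; no conjugation of flows and no normal form near \(N_0\) is needed. You already have the ingredients for this route, since your \(\xi\) is gradient-like for \(d_0\circ\pi\); I would replace the conjugation scheme by the sublevel-set argument rather than attempt to establish an equivariant Sternberg theorem along \(N_0\).
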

\begin{proof}
  \begin{enumerate}
  \item   Note that \(N_0\) is a invariant submanifold of \(M\).
    Therefore there is a tubular neighborhood of \(N_0\) which is equivariantly diffeomorphic to the normal bundle of \(N_0\).
    Let \(d:\mathbb{R}_{\geq 0}^{n}\rightarrow \mathbb{R}\) be the Euclidean distance from \(0\) and \(d_0=d\circ p\) where \(p:\mathbb{R}_{\geq 0}^{n}\times N\rightarrow \mathbb{R}_{\geq 0}^{n}\) is the projection.
  Then \(d_0\) is proper and smooth away from \(\{0\}\times N\).
  Moreover, all points in \(M/T-(\{0\}\times N)\) are regular.
  Therefore the claim follows from the Morse lemma applied to \(d_0\circ \pi\).

\item Since there is a section to the orbit map, \(N_0\) is diffeomorphic to \(T^{k-n}\times N\).
  Since the normal bundle of \(N_0\) splits as a sum of complex line bundles it suffices to consider the case \(n=1\). In that case a section to the normal bundle \(E\rightarrow N_0\) is given by \(s\circ t_\epsilon\circ \pi\), where \(t_{\epsilon}:\{0\}\times N_0\rightarrow \{\epsilon\}\times N_0\) is the obvious diffeomorphism for some small \(\epsilon\) and \(s\) is the section to the orbit map.

  Therefore the second claim follows.
  \end{enumerate}
\end{proof}

We also need to know the equivariant self-diffeomorphisms of conical \(T^k\)-manifolds.

\begin{theorem}
\label{sec:sect-local-coord}
  Let \(M=\mathbb{C}^{n}\times T^{k-n}\times N\) then the group \(\mathcal{D}(M)\)  of equivariant diffeomorphisms of \(M\) is isomorphic to
  \[C^\infty(M/T,T^k)\rtimes \mathrm{Diff}(M/T),\]
where \(\mathrm{Diff}(M/T)\) is the group of all diffeomorphisms of \(M/T\) which map each face of \(M/T\) to itself and \(C^\infty(M/T,T^k)\) denotes the group of smooth maps \(M/T\rightarrow T^k\).

Indeed, every equivariant diffeomorphism \(\Psi\)  of \(M\) is of the form

\[\begin{split}\Psi(z)&=\Psi(z_1,\dots,z_{n},t,x)=\\ &\left(z_1f_1(\pi(z))\sqrt{\frac{\Phi_1(\pi(z))}{|z_1|^2}},\dots,z_nf_n(\pi(z))\sqrt{\frac{\Phi_n(\pi(z))}{|z_n|^2}},t h(\pi(z)),\Phi_N(\pi(z))\right)\end{split}\]

where \(\Phi:M/T\rightarrow M/T\) is a diffeomorphism, \(\Phi_i\) denotes the \(i\)-th component of \(\Phi\) and \(f_i:M/T\rightarrow S^1\) and \(h:M/T\rightarrow T^{k-n}\) are smooth.
    Moreover, for every choice of such maps \(\Phi, f_i, h\), one gets a diffeomorphism \(\Psi\) as above.
\end{theorem}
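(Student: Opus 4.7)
My plan is to reduce the general case to that of an equivariant diffeomorphism covering the identity on $M/T$, by first constructing a canonical equivariant lift of any face-preserving diffeomorphism of the orbit space.

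I would begin by observing that any equivariant $\Psi$ descends to a smooth face-preserving diffeomorphism $\Phi$ on $M/T$: smoothness follows from the corollary of the previous section applied to $\pi\circ\Psi$, while face preservation follows from the invariance of the orbit-type stratification under $\Psi$. By Hadamard's lemma one may factor $\Phi_i(x,y)=x_i\tilde\Phi_i(x,y)$ with $\tilde\Phi_i$ smooth, and because $\Phi$ is a diffeomorphism of the manifold with corners $M/T$ that sends each face to itself, the normal derivative $\partial\Phi_i/\partial x_i$ is strictly positive on $\{x_i=0\}$, so $\tilde\Phi_i>0$ everywhere. This makes the ``canonical lift''
\[
\Psi_0(z,t,x)=\left(z_1\sqrt{\tilde\Phi_1(\pi(z))},\dots,z_n\sqrt{\tilde\Phi_n(\pi(z))},\,t,\,\Phi_N(\pi(z))\right)
\]
a smooth equivariant diffeomorphism inducing $\Phi$, whose inverse is the analogous lift of $\Phi^{-1}$.

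Next I would study $\Psi_1=\Psi\circ\Psi_0^{-1}$, which covers the identity on $M/T$. Writing $\Psi_1(z,t,x)=(w,s,x)$ and using equivariance under the $T^{k-n}$ factor yields $s=t\cdot s_0(z,x)$ with $s_0$ being $T^n$-invariant; the corollary of the previous section then gives $s_0=h\circ\pi$ for a smooth $h:M/T\to T^{k-n}$. Equivariance under each circle factor of $T^n$ similarly forces $w_i=z_i\cdot f_i(\pi(z))$ with $f_i$ smooth, and since $\Psi_1$ covers the identity, $|f_i|=1$, so $f_i$ is $S^1$-valued. Composing on the right with $\Psi_0$ then recovers the stated formula. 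Conversely, for any choice of $(\Phi,f_i,h)$ the formula defines a smooth equivariant map because the potentially singular factor $z_i\sqrt{\Phi_i(\pi(z))/|z_i|^2}=z_i\sqrt{\tilde\Phi_i(\pi(z))}$ is smooth on all of $M$, and an explicit inverse can be built from $(\Phi^{-1},f_i^{-1},h^{-1})$ in a parallel fashion.

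The principal obstacle is the smoothness of the canonical lift $\Psi_0$ across the singular orbits $\{z_i=0\}$, which reduces to the positivity of the Hadamard quotient $\tilde\Phi_i$ on the face $\{x_i=0\}$. Once this is in place, everything else is a bookkeeping exercise translating between torus-invariant smooth maps on $M$ and smooth maps on $M/T$ via the corollary of the previous section.
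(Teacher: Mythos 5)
Your overall strategy---lift the induced face-preserving diffeomorphism \(\Phi\) of \(M/T\) to a canonical equivariant diffeomorphism \(\Psi_0\) via the Hadamard quotients \(\tilde\Phi_i\), then analyse \(\Psi_1=\Psi\circ\Psi_0^{-1}\), which covers the identity---is essentially the paper's proof (a section of \(l_1:\mathcal{D}(M)\to\mathrm{Diff}(M/T)\) plus surjectivity of \(l_2\)), and your treatment of \(\Psi_0\) is exactly the content of the paper's technical lemma. The gap is in the kernel analysis. You assert that equivariance under each circle factor ``forces \(w_i=z_i\cdot f_i(\pi(z))\) with \(f_i\) smooth,'' in supposed parallel with the \(T^{k-n}\)-component, and you then declare everything after \(\Psi_0\) to be bookkeeping. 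The two cases are not parallel: for the torus component, \(s_0(z,x)=\) (the \(T^{k-n}\)-part of \(\Psi_1(z,1,x)\)) is the restriction of a smooth map, is \(T\)-invariant, and descends by the Corollary; for the \(\mathbb{C}\)-components, \(f_i\) arises by dividing a smooth function by \(z_i\), and its smoothness (indeed continuity) across \(\{z_i=0\}\) is not a formal consequence of equivariance plus vanishing on \(\{z_i=0\}\) --- compare \(\bar z_i/z_i\), which is not even continuous at \(0\). This is precisely the second analytic half of the theorem, to which the paper devotes its integral-representation argument: write \(F_j=\int_0^1 D_{z_j}F_j(\dots,z_jt,\dots)(z_j)\,dt\), use equivariance to pull the factor \(z_j\) out of the integral, and use evenness of the radial derivative together with the first lemma to see that the integrand is a smooth function of \(|z_j|^2\). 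As written, your proposal gives no argument at the exact point where one is needed.

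The gap is fillable, and in fact with the tools you already invoke: the function \(z\mapsto w_i(z)\overline{z_i}\) is smooth on all of \(M\) and \(T\)-invariant, hence descends by the Corollary to a smooth function \(g_i\) on \(M/T\) satisfying \(|g_i|=x_i\) (since covering the identity gives \(|w_i|=|z_i|\)), so \(g_i\) vanishes on \(\{x_i=0\}\); Hadamard division by \(x_i\) on the corner domain yields a smooth \(\tilde g_i\) with \(g_i=x_i\tilde g_i\), \(|\tilde g_i|=1\) by density of \(\{x_i>0\}\), and then \(w_i=z_i\,\tilde g_i(\pi(z))\) everywhere. Either this or the paper's evenness argument must be supplied; without it, the claimed description of the kernel of \(l_1\) (and hence the semidirect product decomposition) is unproven.
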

\begin{proof}
  This is basically a local version of Lemma 2.3 in \cite{wiem18}. But note that in that paper the group of weakly equivariant diffeomorphisms was considered, whereas here the group of (strongly) equivariant diffeomorphisms is used.
  In the paper \cite{wiem18} we considered the normalizer of the torus in the group of all diffeomorphism.
  Here we consider the centralizer of the torus.

  For the sake of completeness we give a complete proof here.

  We have two homomorphisms of topological groups:
  \begin{align*}
    l_1:\mathcal{D}(M)&\rightarrow \mathrm{Diff}(M/T)& f&\mapsto([x]\mapsto [f(x)])\\
    l_2: C^\infty(M/T^k, T^k)&\rightarrow \ker l_1& f&\mapsto(x\mapsto x \cdot f(\pi(x))).
  \end{align*}

  To prove the theorem if suffices to show that
  \begin{enumerate}
  \item There is a section to \(l_1\).
  \item \(l_2\) is surjective.
  \end{enumerate}

  {\emph{ Ad (1).}} Let \(\Phi\) be a diffeomorphism of \(M/T\).
  Then
  \[\Psi(z_1,\dots,z_n,t,y)=\left(\sqrt{\Phi_1(\pi(z,t,y))\frac{1}{|z_1|^2}} z_1,\dots,\sqrt{\Phi_n(\pi(z,t,y))\frac{1}{|z_n|^2}} z_n,t,\Phi_N(y)\right)\]
  is a \(T^k\)-equivariant homoemomorphism of \(M\) covering \(\Phi\).
  We claim that \(\Psi\) is of class \(C^\infty\).
  Since our argument is also valid for the inverses of \(\Phi\) and \(\Psi\), this shows that \(\Psi\) is a diffeomorphism.
  To show that \(\Psi\) is \(C^\infty\) we can deal with each component sperately.

  It is clear that the last two components are smooth therefore we only have to deal with the components of the form
  \[\sqrt{\Phi_i(\pi(z,t,x))\frac{1}{|z_i|^2}} z_i.\]

  Since \(\Phi_i(\pi(z,t,x))=0\) if and only if \(z_i=0\), we only have to check differentiability of this component at points with \(z_i=0\).
  Note that since \(\Phi\) is a diffeomorphism of \(M/T\) which preserves the face structure of \(M/T\) we have
  \[\frac{\partial\Phi_i}{\partial |z_j|^2}|_{z_i=0}=0 \quad\quad \text{ for } j\neq i\]
    and 
    \[\frac{\partial\Phi_i}{\partial |z_i|^2}|_{z_i=0}> 0.\]
    
Therefore smoothness of the above component follows from Lemma \ref{technical_lemma} below.
 
  \emph{Ad (2).}
 We show that the kernel of the natural map \(l_1:\mathcal{D}(M)\rightarrow \mathrm{Diff}(M/T)\) is isomorphic to \(C^{\infty}(M/T,T^k)\).
  Since \(T^k\) is abelian, there is a natural map \(l_2\) from \(C^{\infty}(M/T,T)/T\) to the kernel of \(l_1\). Namely \(f\mapsto (x\mapsto x\cdot f(\pi(x))\).

We show that this map is a homeomorphism.
To do so, let \(F\in \ker l_1\).
Then \(F\) leaves all \(T\)-invariant subsets of \(M\) invariant.
Since \(M\) is conical \(F\) has the following form:
\begin{align*}
  F(z_1,\dots,z_n,t,y)=(z_1&f_1(z_1,\dots,z_n,y),\dots,z_nf_n(z_1,\dots,z_n,y),\\ &th(z,y),F_N(z_1,\dots,z_n,y)),
\end{align*}
where \((z_1,\dots,z_n)\in \mathbb{C}^n\), \(t \in T^{k-n}\),\(y\in N\) and \(f_j(z_1,\dots,z_n,y)\in S^1\), \(h(z,y)\in T^{k-n}\), for \(j=1,\dots,n\) depends only on \((|z_1|^2,\dots,|z_n|^2)\) and \(y\).

We have to show that \(f_j\) is smooth for all \(j\).

Smoothness in points with \(z_j\neq 0\) follows from the smoothness of \(F\).
We show that \(f_j\) is also smooth in points with \(z_j=0\).

Since \(F\) is smooth, by the fundamental theorem of calculus (applied to the derivative of the function \(t\mapsto F_j(z_1,\dots,z_{j-1},z_jt,z_{j+1},\dots,z_n,y)\)), we have for \((z_1,\dots,z_n)\in \mathbb{C}^n\),
\begin{align*}
  z_jf_j(z_1,\dots,z_n,y)&=F_j(z_1,\dots,z_n,y)=\\ &\int_0^1(D_{z_j}F_j(z_1,\dots,z_{j-1},z_jt,z_{j+1},\dots,z_n,y))(z_j) \; dt,
\end{align*}
where
\begin{equation*}
  (D_{z_j}F_j(z_1,\dots,z_n,y))(z)=\left(\frac{\partial F_j}{\partial x_j}(z_1,\dots,z_n,y),\frac{\partial F_j}{\partial y_j}(z_1,\dots,z_n,y)\right)(v,w)^t
\end{equation*}
with \(z_l=x_l+iy_l\) for \(l=1,\dots,n\) and \(z=v+iw\), \(x_l,v,y_l,w\in \mathbb{R}\) is the derivative of \(F_j\) in the point \((z_1,\dots,z_n,y)\) in direction \(z\) in the \(j\)-th coordinate.

Since \(F\) is \(T\)-equivariant,
we have \[zF_j(z_1,\dots,z_n)=F_j(z_1,\dots,z_{j-1},zz_j,z_{j+1}\dots,z_n)\] for \(z\in S^1\subset \mathbb{C}\).
Hence it follows that
\[zD_{z_j}F_j(z_1,\dots,z_n)(z')=D_{z_j}F_j(z_1,\dots,z_{j-1},zz_j,z_{j+1},\dots,z_n)(zz')\]
for \(z'\in \mathbb{C}\).

Therefore it follows that
\begin{align*}
  z_jf_j(z_1,\dots,z_n,y)&=\int_0^1 (D_{z_j}F_j(z_1,\dots,z_{j-1},z_jt,z_{j+1},\dots,z_n,y))(z_j) \; dt\\
&=\int_0^1z_j (D_{z_j}F_j(z_1,\dots,z_{j-1},\frac{|z_j|}{z_j} z_jt,z_{j+1},\dots,z_n,y))(1) \; dt\\
&=z_j\int_0^1 (D_{z_j}F_k(z_1,\dots,z_{j-1},|z_j|t,z_{j+1},\dots,z_n,y))(1) \; dt.
\end{align*}

Since \(F\) is \(T\)-equivariant, it follows that \[t\mapsto
(D_{z_j}F_k(z_1,\dots,z_{j-1},t,z_{j+1},\dots,z_n,y))(1),\] \(t\in \mathbb{R}\), is an even function.
Therefore the integrand in the last integral depends smoothly on \((z_1,\dots,z_n,y)\) and \(f_j\) is smooth everywhere.
Because \(f_j\) is \(T\)-invariant, it induces a smooth map on the orbit space, whose derivatives depend continuously on the derivatives of \(F\).

Hence the theorem is proved.

\end{proof}

\begin{lemma}
  \label{technical_lemma}
  Let \(f:\mathbb{R}_{\geq 0}\times \mathbb{R}^{n-1}\rightarrow \mathbb{R}\), \((x,y)\mapsto f(x,y)\) be a smooth function such that:
  \begin{enumerate}
  \item \(f(0,\cdot)=0\),
  \item \(\frac{\partial f}{\partial x}(0,\cdot)>0\), and
  \item \(f(x,y)>0\) if \(x>0\).
  \end{enumerate}
  Then \(g:\mathbb{R}_{\geq 0}\times \mathbb{R}^{n-1}\rightarrow \mathbb{R}\),
  \[g(x,y)=\begin{cases}\frac{f(x,y)}{x}&\text{if } x>0\\
      \frac{\partial f}{\partial x}(0,y)& \text{if }x=0
    \end{cases}\]
  is a smooth positive function.
  In particular, \(\sqrt{g}\) is a smooth positive function.
\end{lemma}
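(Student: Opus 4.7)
The plan is to apply the integral form of Hadamard's lemma in order to identify \(g\) with a manifestly smooth function on \(\mathbb{R}_{\geq 0}\times \mathbb{R}^{n-1}\). Concretely, I would define
\[\tilde g(x,y):=\int_0^1\frac{\partial f}{\partial x}(tx,y)\,dt\]
for \((x,y)\in \mathbb{R}_{\geq 0}\times \mathbb{R}^{n-1}\). The integrand is well defined because \(tx\geq 0\) whenever \(t\in[0,1]\) and \(x\geq 0\), and it is smooth in \((t,x,y)\) since \(\partial f/\partial x\) is smooth on the closed half-space. Standard differentiation under the integral sign then shows that \(\tilde g\) itself is smooth on \(\mathbb{R}_{\geq 0}\times \mathbb{R}^{n-1}\).

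Next I would invoke the chain rule together with the fundamental theorem of calculus to get
\[\int_0^1\frac{d}{dt}f(tx,y)\,dt=f(x,y)-f(0,y)=f(x,y),\]
using hypothesis~(1). Since \(\tfrac{d}{dt}f(tx,y)=x\tfrac{\partial f}{\partial x}(tx,y)\), this yields the identity \(f(x,y)=x\,\tilde g(x,y)\). For \(x>0\) this gives \(\tilde g(x,y)=f(x,y)/x=g(x,y)\); at \(x=0\) it gives \(\tilde g(0,y)=\tfrac{\partial f}{\partial x}(0,y)=g(0,y)\). Hence \(g=\tilde g\) is smooth.

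Positivity is then immediate: on \(\{x>0\}\) hypothesis~(3) gives \(g(x,y)=f(x,y)/x>0\), while on \(\{x=0\}\) hypothesis~(2) gives \(g(0,y)=\tfrac{\partial f}{\partial x}(0,y)>0\). Since the square-root function is smooth on \((0,\infty)\) and \(g\) is strictly positive everywhere, \(\sqrt{g}\) is smooth and positive.

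There is no genuine obstacle in this argument; the only mildly delicate point is that \(f\) is defined only on a closed half-space, so one cannot directly apply the two-sided Hadamard lemma. The integral representation above sidesteps this difficulty by evaluating \(\partial f/\partial x\) only at nonnegative first arguments, so no extension of \(f\) beyond \(\{x=0\}\) is required.
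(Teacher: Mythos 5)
Your argument is correct, and it takes a genuinely different route from the paper. You prove smoothness via the integral form of Hadamard's lemma: writing \(g(x,y)=\int_0^1 \frac{\partial f}{\partial x}(tx,y)\,dt\) and checking, via the fundamental theorem of calculus and hypothesis (1), that this integral agrees with \(f(x,y)/x\) for \(x>0\) and with \(\frac{\partial f}{\partial x}(0,y)\) at \(x=0\); smoothness of all orders then comes in one stroke from differentiation under the integral sign, and your observation that \(tx\geq 0\) keeps everything inside the closed half-space correctly disposes of the only delicate point. The paper instead argues with finite Taylor expansions in the \(x\)-variable: by hypothesis (1) the expansion of \(f\) at \(\{x=0\}\) up to order \(r\) has no constant term, so dividing by \(x\) exhibits \(g\) as a function of class \(C^{r-1}\) near the boundary, and letting \(r\to\infty\) gives smoothness. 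Your integral representation is arguably cleaner — it avoids the remainder bookkeeping and the ``\(C^{r-1}\) for every \(r\)'' limiting step — while the paper's Taylor argument has the virtue of matching the technique already used in its first lemma (the Bredon-style even-function argument), so it stays within a single toolkit. The positivity and \(\sqrt{g}\) statements are handled identically in both treatments.
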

\begin{proof}
  We only have to prove smoothness of \(g\) at \(\{x=0\}\).
  To do so note that the Taylor expantion of \(f\) at \(\{x=0\}\) up to order \(r\) with respect to the first coordinate is of the form
  \[p_r(x,y)=\frac{\partial f}{\partial x}(0,y)x + \sum_{i=2}^r h_i(y)x^i\]
  with smooth functions \(h_i:\mathbb{R}^{n-1}\rightarrow \mathbb{R}\).
  Dividing this expression by \(x\) shows that \(g\) is of class \(C^{r-1}\).
  Since this holds for all \(r\) smoothness of \(g\) follows.
\end{proof}

\section{Existence of regular sections}
\label{sec:exist-regul-sect-1}

One technical problem in the classification of smooth locally standard \(T^k\)-mani\-folds is that there is never a section to the orbit map of such a manifold which is smooth at the boundary of \(M/T\).
However, in this section we show that if there is a continuous cross-section to the orbit map then there is also a section with enough regularity to carry out the proof of the smooth classification.
We call these sections regular sections.
We start with the setup for their definition.

 Let \(M\) be a locally standard \(T^k\)-manifold and \(s:M/T\rightarrow M\) a section to the orbit map.
  Let \(x\in M/T\). Let \(U\subset M/T\) be a conical neighborhood of \(x\).
  Then by the previous section \(\pi^{-1}(U)\) is diffeomorphic to \(\mathbb{C}^n\times T^{k-n}\times N\).
  In these coordinates \(s|_U\) is of the form
  \(s(x,y)=f(x,y)s_0(x,y)\), where \(s_0(x,y)=s_0(x_1,\dots,x_n,y)=(\sqrt{x_1},\dots,\sqrt{x_n},1,y)\) for  \(x=(x_1,\dots,x_n)\in \mathbb{R}_{\geq 0}^n\), \(y\in N\), \(f:U\rightarrow T^k\) some map which might be non-continuous at the boundary.
  Note that in general \(f\) is not unique.
  
\begin{definition}
  We say that \(s\) is regular at \(x\) if \(f\) can be choosen so that it is smooth in an open neighborhood \(V\subset U\) of \(x\).
  We say that \(s\) is regular if it is regular at every \(x\in M/T\).
\end{definition}

We need to know that the above definition does not depend on the choice of coordinates around \(x\).
This is done by the following lemma.

\begin{lemma}
  If \(s|_U\) is of the form in the definition for some choice of coordinates, then it is also of this form for all other coordinates.
\end{lemma}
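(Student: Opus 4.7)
The plan is to reduce the lemma to the explicit description of equivariant self-diffeomorphisms of conical $T^k$-manifolds in Theorem~\ref{sec:sect-local-coord}, by checking how the reference section $s_0$ transforms under such a diffeomorphism.

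I would fix two conical charts $\psi_j : \pi^{-1}(U) \to \mathbb{C}^n \times T^{k-n} \times N$, $j=1,2$ (identifying the two target factors $N$ via an auxiliary diffeomorphism if necessary), let $s_0^{(j)}$ denote the corresponding reference sections, and write $s|_U = f^{(j)} \cdot s_0^{(j)}$ in each chart. Assume that $f^{(1)}$ is smooth on some open neighborhood $V$ of the base point. Then $\Psi := \psi_2 \circ \psi_1^{-1}$ is an equivariant self-diffeomorphism of the conical model, so Theorem~\ref{sec:sect-local-coord} describes it by smooth data $\Phi : M/T \to M/T$, $f_1, \dots, f_n : M/T \to S^1$, and $h : M/T \to T^{k-n}$.

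The key computational step is to evaluate $\Psi$ on $s_0^{(1)}(x,y) = (\sqrt{x_1}, \dots, \sqrt{x_n}, 1, y)$. The denominators $\sqrt{|z_i|^2} = \sqrt{x_i}$ in the formula for $\Psi$ cancel against the $\sqrt{x_i}$ from $s_0^{(1)}$, yielding
\[\Psi\bigl(s_0^{(1)}(x,y)\bigr) = \bigl(f_1(x,y)\sqrt{\Phi_1(x,y)}, \dots, f_n(x,y)\sqrt{\Phi_n(x,y)}, h(x,y), \Phi_N(x,y)\bigr).\]
Comparing with $s_0^{(2)}(\Phi(x,y)) = (\sqrt{\Phi_1(x,y)}, \dots, \sqrt{\Phi_n(x,y)}, 1, \Phi_N(x,y))$ and recalling the torus action, I can rewrite this as
\[\Psi\bigl(s_0^{(1)}(p)\bigr) = g(p) \cdot s_0^{(2)}(\Phi(p)), \qquad g(p) := \bigl(f_1(p), \dots, f_n(p), h(p)\bigr) \in T^k,\]
where $g$ is smooth on all of $M/T$.

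Equivariance of $\Psi$ and the defining identities $s|_U = f^{(j)} \cdot s_0^{(j)}$ together give
\[f^{(1)}(p)\, g(p) \cdot s_0^{(2)}(\Phi(p)) = \Psi\bigl(f^{(1)}(p)\, s_0^{(1)}(p)\bigr) = \Psi(s(p)) = f^{(2)}(\Phi(p)) \cdot s_0^{(2)}(\Phi(p)),\]
so one may take $f^{(2)} \circ \Phi = f^{(1)} \cdot g$ as a valid representative of $f^{(2)}$ (the residual ambiguity being absorbed into the stabilizer of $s_0^{(2)}$). Since $\Phi$ is a diffeomorphism and $g$ is smooth, the smoothness of $f^{(1)}$ on $V$ transfers to smoothness of $f^{(2)}$ on $\Phi(V)$, proving the lemma. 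The only step that needs genuine care is the cancellation of the square roots in $\Psi(s_0^{(1)}(x,y))$; once that is established, the remainder of the argument is formal.
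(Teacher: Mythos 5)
Your argument is correct and follows essentially the same route as the paper: both reduce the chart-independence of regularity to Theorem~\ref{sec:sect-local-coord}, applied to the equivariant transition diffeomorphism of the conical model, which shows that the reference section $s_0$ is carried to a smooth $T^k$-valued function times $s_0$ composed with the base diffeomorphism. Your explicit verification of the square-root cancellation just spells out what the paper absorbs into the citation of that theorem.
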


\begin{proof}
  Let \(U_1,U_2\) be to conical neighborhoods of \(x\). We can assume that they are both equal as subsets of \(M/T\). Moreover, we can assume that they are both diffeomorphic to the same standard model \(U_{std}=\mathbb{R}_{\geq 0}^n\times N\) with \(\pi^{-1}(U_{std})=\mathbb{C}^n\times T^{k-n}\times N\). Let \(\bar\Phi_i:U_{std}\rightarrow U_i\) be diffeomorphisms and \(\bar\Phi=\bar\Phi_1^{-1}\circ \bar\Phi_2\) the transition function. Let \({\Phi}\) be a equivariant diffeomorphism \(\pi^{-1}(U_{std})\rightarrow \pi^{-1}(U_{std})\) covering \(\bar\Phi\). Write \(s_1(x)=\Phi_1^{-1}\circ s\circ\bar\Phi_1(x)=f(x)\cdot s_0(x)\).
  Then
  \[s_2(x)=\Phi_2^{-1}\circ s \circ\bar\Phi_2(x)=\Phi^{-1}\circ s_1\circ\bar{\Phi}(x)=f(x)\Phi^{-1}\circ s_0\circ \bar{\Phi}(x)=g(x)f(x)s_0(x)\]
  with \(g:U_{std}\rightarrow T^k\) smooth as in Theorem~\ref{sec:sect-local-coord}.
  Therefore if \(s_1\) is regular at \(x\) it follows that \(s_2\) is also regular at \(x\).
\end{proof}

After this preparation we can show the existence of regular sections.

\begin{theorem}
  \label{sec:exist-regul-sect}
  Let \(M\) be a locally standard \(T^k\)-manifold such that there is a section to
  \(\pi:M\rightarrow M/T\).
  Then there is a regular section.
\end{theorem}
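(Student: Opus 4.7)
The plan is to set up a \v{C}ech-cohomological framework and construct a regular section by smoothly gluing together local regular sections.

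First, I would cover $M/T$ by a locally finite family $\{U_\alpha\}_{\alpha\in A}$ of conical open subsets with fixed diffeomorphisms $U_\alpha\cong \mathbb{R}_{\geq 0}^{n_\alpha}\times N_\alpha$. On each $U_\alpha$, the ``standard'' map $s_\alpha(x,y)=(\sqrt{x_1},\dots,\sqrt{x_{n_\alpha}},1,y)$ coming from the conical chart is a local section that is manifestly regular. By Theorem~\ref{sec:sect-local-coord}, on each non-empty overlap $U_\alpha\cap U_\beta$ the two local sections $s_\alpha,s_\beta$ are related by a \emph{smooth} map $g_{\alpha\beta}\colon U_\alpha\cap U_\beta\to T^k$ with $s_\alpha=g_{\alpha\beta}\cdot s_\beta$, and the $g_{\alpha\beta}$ satisfy the usual $T^k$-valued cocycle condition.

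Next, the given continuous section writes as $s|_{U_\alpha}=f_\alpha\cdot s_\alpha$ for some continuous $f_\alpha\colon U_\alpha\to T^k$, and on overlaps one has $f_\alpha=g_{\alpha\beta}\,f_\beta$. Hence the smooth cocycle $\{g_{\alpha\beta}\}$ is trivialised \emph{continuously} by $\{f_\alpha\}$. The crux is to show that it is also trivialised \emph{smoothly}: once smooth $\tilde f_\alpha\colon U_\alpha\to T^k$ with $\tilde f_\alpha=g_{\alpha\beta}\,\tilde f_\beta$ are produced, the local maps $\tilde f_\alpha\cdot s_\alpha$ agree on overlaps and define a global regular section by the converse direction of Theorem~\ref{sec:sect-local-coord}.

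For the smoothing step I would proceed by induction along a shrinking $\{V_\alpha\}$ with $\overline{V_\alpha}\subset U_\alpha$ of the countable locally finite cover. Having produced smooth $\tilde f_1,\dots,\tilde f_n$ satisfying the cocycle on $V_1\cup\dots\cup V_n$, the would-be value of $\tilde f_{n+1}$ is forced on the closed subset $\overline{V_{n+1}}\cap(V_1\cup\dots\cup V_n)\subset U_{n+1}$ and is smooth there by the induction hypothesis; one extends it smoothly to all of $U_{n+1}$ by passing to local lifts $\mathbb{R}^k\to T^k$ (available on each simply-connected piece) and applying the Whitney extension theorem for $\mathbb{R}^k$-valued smooth maps on a manifold with corners. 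The main obstacle is exactly this smooth extension step: conceptually, it amounts to the classical fact that continuous and smooth principal $T^k$-bundles over a smooth manifold with corners are classified alike, but the inductive procedure sketched here carries it out by hand and relies only on the local description provided by Theorem~\ref{sec:sect-local-coord}.
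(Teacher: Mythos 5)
Your reduction to trivialising the smooth cocycle $\{g_{\alpha\beta}\}$ is a reasonable strategy, but two of its steps are genuinely gapped. First, the claim that the given continuous section can be written on a conical chart as $s|_{U_\alpha}=f_\alpha\cdot s_\alpha$ with $f_\alpha\colon U_\alpha\to T^k$ \emph{continuous up to the boundary} is false in general: continuity of $s$ gives no control of the ``angular part'' as one approaches a face. Already for the standard $S^1$-action on $\mathbb{C}$ with orbit space $\mathbb{R}_{\geq 0}$, the section $s(x)=\sqrt{x}\,e^{i/x}$ for $x>0$, $s(0)=0$, is continuous, but the only possible $f$ is $f(x)=e^{i/x}$, which has no continuous extension to $x=0$. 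This failure is exactly what the notion of regular section is meant to repair (the paper explicitly allows $f$ to be discontinuous at the boundary), so assuming continuous $f_\alpha$ comes close to assuming the conclusion. What the hypothesis actually yields is a continuous trivialisation of $\{g_{\alpha\beta}\}$ over the principal stratum (the interior of $M/T$) only; promoting this to continuous triviality over all of $M/T$ needs an extra argument (e.g.\ that a manifold with corners is homotopy equivalent to its interior), which is missing. Relatedly, $g_{\alpha\beta}$ is a priori only defined modulo isotropy at boundary points; the exact cocycle identity you use does hold, but only after remarking that $g_{\alpha\beta}$ is determined on the dense principal stratum and arguing by continuity.

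Second, the inductive smoothing step does not work as described: $\overline{V_{n+1}}\cap(V_1\cup\dots\cup V_n)$ is in general not closed in $U_{n+1}$, and the cocycle forces the value of $\tilde f_{n+1}$ on the whole \emph{open} set $V_{n+1}\cap(V_1\cup\dots\cup V_n)$, so a Whitney-type extension from a smaller closed set cannot produce exact agreement there without going back and modifying $\tilde f_1,\dots,\tilde f_n$. The fact you are appealing to (a smooth $T^k$-cocycle that is continuously trivial is smoothly trivial) is true, but it is proved by smoothing a continuous section of the associated smooth principal $T^k$-bundle, or via the exponential sheaf sequence, not by raw extension of forced values. Note that the paper avoids this machinery entirely: it orders the open faces of $M/T$, starts from a smooth section over the union of principal orbits (where the hypothesis of a continuous section is used), and extends the trivialising $T^k$-valued map over a conical tubular neighborhood $E$ of one new face $F$ at a time, using that $E-F\hookrightarrow E$ is a homotopy equivalence to perturb the map so that it extends; together with Theorem~\ref{sec:sect-local-coord} this gives Theorem~\ref{sec:exist-regul-sect} directly. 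Your approach could be repaired along these lines, or by correctly invoking the continuous-versus-smooth classification of principal bundles over the interior plus a homotopy-equivalence argument, but as written both the continuity of $f_\alpha$ and the extension step are gaps.
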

\begin{proof}
  There is a partial ordering of the open faces of \(M/T\) as follows:
  \begin{equation*}
    F\geq F'\quad\quad\Leftrightarrow\quad\quad F'\subset \overline{F},
  \end{equation*}
  where \(\overline{F}\) denotes the closure of \(F\).
  Order the faces \(F_0,F_1,\dots,F_m\) of \(M/T\) in such a way that
  \begin{equation*}
   i\leq j \quad\quad\text{if}\quad\quad F_i\geq F_j.
 \end{equation*}
 For \(j=0,\dots, m\) let \(M_j=\pi^{-1}(\bigcup_{i=0}^j F_i)\). Then \(M_0\) is the union of principal orbits and \(M_m=M\).
 Note that there is a smooth section to \(\pi|_{M_0}:M_0\rightarrow M_0/T\).

 Therefore it suffices to show the following claim:

 \textbf{Claim:} If there is a regular section to \(\pi|_{M_j}\), then there is also a regular section to \(\pi|_{M_{j+1}}\).

   To see this let \(E_{j+1}\) be a tubular neighborhood of \(F_{j+1}\subset M_{j+1}/T\).
   Note \(E_{j+1}\) is conical.

   Therefore the problem of extending a regular section on \(M_{j}/T\) to a regular section on \(M_{j+1}/T\) is the same as extending a smooth map \(f:E_{j+1}-F_{j+1}\rightarrow T^k\) to all of \(E_{j+1}\).
   Note that
   \(E_{j+1}\) is diffeomorphic to \(F_{j+1}\times \mathbb{R}^n_{\geq 0}\), via a diffeomorphism which maps \(F_{j+1}\) to \(F_{j+1}\times \{0\}\).
  Hence \(E_{j+1}-F_{j+1}\) is homotopy equivalent to \(E_{j+1}\). Therefore we can perturb \(f\) so that it can be extended to all of \(E_{j+1}\).
   Hence the claim and the theorem are proven.
\end{proof}

\section{The proof of the main result}

After establishing the existence of regular sections we can now prove our main result.

\begin{theorem}
  \label{sec:proof-main-result}
  Let \(M_i\), \(i=1,2\), be two locally standard \(T^k\)-manifolds with orbit spaces \(P_i\) and characteristic functions \(\lambda_i\).
  Assume that there are regular sections \(s_i:P_i\rightarrow M_i\).

  If there is a diffeomorphism
  \[\Phi:P_1\rightarrow P_2\]
  such that \[\lambda_1=\lambda_2\circ \Phi,\] then there is a unique equivariant diffeomorphism \(\Psi:M_1\rightarrow M_2\), such that
  \begin{equation}
    \label{eq:1}
  \Psi\circ s_1=s_2\circ \Phi.
  \end{equation}
\end{theorem}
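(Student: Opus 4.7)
The plan is to construct $\Psi$ directly from the sections using the characteristic function compatibility, and then to verify its smoothness locally by invoking both Theorem~\ref{sec:sect-local-coord} and the regularity hypothesis on the sections.

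First I would define $\Psi$ set-theoretically. Since $s_1$ is a section to the orbit map, every point of $M_1$ is of the form $g\cdot s_1(p)$ with $g\in T^k$, $p\in P_1$, and two such pairs $(g,p)$, $(g',p')$ yield the same point precisely when $p=p'$ and $g^{-1}g'\in \lambda_1(p)$. The hypothesis $\lambda_1=\lambda_2\circ\Phi$ says $\lambda_1(p)=\lambda_2(\Phi(p))$, so the assignment
\[
\Psi\bigl(g\cdot s_1(p)\bigr):=g\cdot s_2(\Phi(p))
\]
is well defined. By construction $\Psi$ is $T^k$-equivariant and satisfies \eqref{eq:1}, and uniqueness of such an equivariant $\Psi$ is immediate because every point of $M_1$ is $T^k$-equivalent to one in the image of $s_1$. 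A symmetric construction using $\Phi^{-1},s_2,s_1$ gives a two-sided set-theoretic inverse, so $\Psi$ is a bijection.

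The real work is to show that $\Psi$ is smooth, and this is a local matter. Fix $p\in P_1$ and choose a conical neighborhood $U_1\subset P_1$ of $p$ with $\pi_1^{-1}(U_1)\cong \mathbb{C}^n\times T^{k-n}\times N_1$, where $n$ is the codimension of the minimal face of $P_1$ containing $p$. Using $\lambda_1=\lambda_2\circ\Phi$ together with the fact that $\Phi$ is a diffeomorphism of manifolds with corners (hence preserves the face structure) one chooses a matching conical neighborhood $U_2$ of $\Phi(p)$ with $\pi_2^{-1}(U_2)\cong \mathbb{C}^n\times T^{k-n}\times N_2$ and coordinates in which $\Phi|_{U_1}$ has the form analyzed in part~(1) of the proof of Theorem~\ref{sec:sect-local-coord}, i.e. $\Phi_i$ vanishes exactly where $x_i=0$ and $\partial\Phi_i/\partial x_i|_{x_i=0}>0$. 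By regularity of $s_i$, in these coordinates
\[
s_i(x,y)=f_i(x,y)\cdot\bigl(\sqrt{x_1},\dots,\sqrt{x_n},1,y\bigr)
\]
with smooth maps $f_i:U_i\to T^k$.

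A direct calculation then expresses $\Psi$ in coordinates by a formula of exactly the shape displayed in Theorem~\ref{sec:sect-local-coord}, with the $\Phi_i$'s the coordinate components of $\Phi$ and with the smooth correction factor $h=(f_2\circ\Phi)\cdot f_1^{-1}$. Smoothness of this expression reduces, component by component, to Lemma~\ref{technical_lemma} applied to each $\Phi_i$. Once smoothness is known in every such chart, the same argument applied to $\Psi^{-1}$ (built from $\Phi^{-1}$) shows that $\Psi$ is an equivariant diffeomorphism. The main obstacle is precisely the behavior on the singular strata $\{z_i=0\}$: the factors $\sqrt{\Phi_i(\pi(z))/|z_i|^2}$ are only smooth across $\{z_i=0\}$ because of the non-vanishing of $\partial\Phi_i/\partial x_i$ there, and the correction factor $h$ is only smooth across these strata thanks to the regularity of the two sections guaranteed by Theorem~\ref{sec:exist-regul-sect}. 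Without that regularity the assignment would only give a continuous equivariant homeomorphism, so the whole argument hinges on the preceding section.
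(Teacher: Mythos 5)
Your proposal is correct and takes essentially the same route as the paper: the paper obtains the unique equivariant homeomorphism as in Davis--Januszkiewicz (which is exactly your explicit construction $g\cdot s_1(p)\mapsto g\cdot s_2(\Phi(p))$), then checks smoothness locally on conical models by writing $\Psi$ in the normal form of Theorem~\ref{sec:sect-local-coord} with correction factor $f_2(\Phi(\pi_1(z)))\cdot f_1(\pi_1(z))^{-1}$ supplied by regularity of the sections, the key analytic input being Lemma~\ref{technical_lemma}. Your only additions are spelling out the well-definedness via $\lambda_1=\lambda_2\circ\Phi$ and the choice of matching conical charts, which the paper leaves implicit.
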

\begin{proof}
  The existence of a unique equivariant homeomorphism \(\Psi\) such that (\ref{eq:1}) holds, follows as in the proof of Proposition 1.8 of \cite{DJ91}.
  Therefore it suffices to show that \(\Psi\) is smooth.
  Smoothness is a local property. Therefore it suffices to show the claim for the local models of \(M_i\), that is for conical \(M_i\); say \(M_i\cong \mathbb{C}^n\times T^{k-n}\times \mathbb{R}^m\) and \(P_i\cong \mathbb{R}_{\geq 0}^n\times \mathbb{R}^m\).

  Since the sections \(s_i\), \(i=1,2\), are regular there exist smooth functions \(f_i:P_i\rightarrow T^n\) such that
  \[s_i(x)=f_i(x)\cdot s_0(x),\]
  where \(s_0\) is as in Section \ref{sec:exist-regul-sect-1}.
  So, from (\ref{eq:1}), we have
  \[\Psi(z)=\Psi(z_1,\dots,z_n,t,y)=\left(f_2(\Phi(\pi_1(z))) \cdot f_1(\pi_1(z))^{-1}\right)\cdot \left(\sqrt{\frac{\Phi_i( \pi_1(z))}{|z_i|^2}}z_i,t,\Phi_N(y)\right).\]
    Note that the \(f_i\) and \(\pi_1\) are smooth and \(\Phi\) is a diffeomorphism.
  Hence \(\Psi\) is a diffeomorphism by Theorem~\ref{sec:sect-local-coord}.
\end{proof}

Now Theorem~\ref{sec:introduction} follows from Theorems~\ref{sec:exist-regul-sect} and \ref{sec:proof-main-result}.

\bibliography{smooth}{}
\bibliographystyle{alpha}

\end{document}